\newcommand{\R}{\mathbb{R}}
\newcommand{\PP}{\mathbb{P}}
\newcommand{\EE}{\mathbb{E}}
\newcommand{\LCI}{\mathrm{LCI}}
\newcommand{\PD}{\mathrm{PD}}
\providecommand{\dist}{\operatorname{dist}}
\providecommand{\aff}{\operatorname{aff}}
\newtheoremstyle{break}
  {\topsep}{\topsep}
  {\itshape}{}
  {\bfseries}{}
  {\newline}{}
\theoremstyle{break}
\newtheorem{theorem}{Theorem}
\newtheorem*{theorem*}{Theorem}
\newtheorem{lemma}[theorem]{Lemma}
\newtheorem{observation}[theorem]{Observation}
\newtheorem{remark}[theorem]{Remark}
\newtheorem{claim}[theorem]{Claim}
\newtheorem{proposition}[theorem]{Proposition}
\newtheorem{fact}[theorem]{Fact}
\newtheorem{corollary}[theorem]{Corollary}
\theoremstyle{definition}
\newtheorem*{definition}{Definition}
\theoremstyle{break}
\title{Determining a Point Configuration from a Subset of the Pairwise Distances}
\author{Itai Benjamini \thanks{\href{mailto:itai.benjamini@weizmann.ac.il}{itai.benjamini@weizmann.ac.il}}
\and Elad Tzalik \thanks{\href{mailto:elad.tzalik@weizmann.ac.il}{elad.tzalik@weizmann.ac.il}}
}
\date{
Weizmann Institute of Science \\[2ex]
\today
}
\begin{document}

\maketitle

\begin{abstract}
    We study rigidity without assuming general position. Given \(n\) distinct labelled points and a set \(\mathcal{P}\subseteq \binom{[n]}{2}\) of revealed pairs, we ask when the corresponding distances determine the configuration up to isometry. On the line, we prove an extremal result: if \(|\mathcal{P}|=\Omega(n^{3/2})\), then there is an induced globally rigid subgraph on \(\Omega(|\mathcal{P}|/n)\) vertices. In other words, any dense enough graph will contain a subset of labels whose locations can be determined from their distances up to isometry. To prove this, we establish a graph-theoretic result, which may be of independent interest: a dense graph in which every non-edge has few common neighbours contains a clique of size \(\Omega(|E|/n)\).

We also study random revealed pairs. For every labelled configuration \(V\) of distinct points in \(\mathbb R\), if each pair is revealed independently with probability \(p=C\ln n/n\), where \(C>1\), then the revealed distances determine \(V\) w.h.p. We prove a similar result for \(d\ge1\) under the mild non-degeneracy assumption that every subcollection of more than \(\tau n\) points of \(V\subseteq\mathbb R^d\) affinely spans \(\mathbb R^d\), for some fixed \(0<\tau<1\). In this case, every \(C>1/(1-\tau)\) suffices. The same ideas also settle the weak-threshold form of a conjecture of Girão et al. for a giant reconstructable component, and substantially improve in this direction the work of Barnes et al. establishing such a component for \(p>n^{-2/(d+4)}\).

Subsequent works have sharpened the random setting. Girão, Illingworth, Michel, Powierski, and Scott proved a hitting time result for the first moment at which one can reconstruct \(V\) when \(\mathcal{P}\) is revealed using the Erdős--Rényi evolution; our extremal result is central to their argument. Montgomery, Nenadov, Portier and Szabó resolved one of our conjectures and proved that w.h.p. a graph sampled from the Erdős--Rényi evolution becomes globally rigid in \(\mathbb{R}\) at the moment its minimum degree is \(2\).

\end{abstract}

\section{Introduction}

We consider the following question: given $n$ distinct points $x_1,\ldots,x_n$ in a metric space $M$, and $\mathcal{P} \subseteq \binom{[n]}{2}$, are the points determined up to isometry from their partial distances according to $\mathcal{P}$? In $\R^d$ with $d>1$, it is easy to construct examples where all pairwise distances except one are given, yet the distance of the missing pair cannot be deduced from the others \footnote{In other words there exist $x_1,\ldots,x_n$ and $y_1,\ldots,y_n$ that are not isometric, yet $\dist(x_i,x_j)=\dist(y_i,y_j)$ for all $\{i,j\}\neq\{1,2\}$, $i,j \in [n]$.}. In order to say something meaningful on the problem above, a popular line of work in rigidity theory is to assume $x_1,\ldots, x_n$ are generic, meaning that all their coordinates are algebraically independent over $\mathbb{Q}$. In that case, one can show that some graphs can distinguish all $n$-tuples of points in generic position by their pairwise distances, while for the rest of the graphs there will be two non-isometric $n$-tuples that have the same pairwise distances.

In $\R$, it is well-known that if the points are in generic position, then all distances are determined by $\mathcal{P}$ if and only if $([n],\mathcal{P})$ is $2$-connected, as shown in \cite{global_rigidity_chap}. However, when the points are not in generic position, it is easy to construct two $4$-tuples that the $4$-cycle, $C_4$, cannot distinguish.

We study rigidity \emph{without} assuming general position but only assuming the weaker assumption that the given points are distinct, and show that even under this weaker assumption, the distance information along a graph $([n],\mathcal{P})$ carries meaningful information.

\begin{definition}
For a graph $G=([n],\mathcal{P})$ and an injective map $f \colon [n] \to M$, let $d_{G,f} \colon \mathcal{P} \to \mathbb{R}_{\geq 0}$ be the edge-distance function given by $d_{G,f}(\{i,j\})=\dist(f(i),f(j))$. We say that $f$ realizes a function $d \colon \mathcal{P}\to \mathbb{R}_{\geq 0}$ if $d=d_{G,f}$. A graph $G$ is \emph{globally rigid} in $M$ if, for any two injective maps $f,g \colon [n]\to M$ such that $d_{G,f}=d_{G,g}$, one has $\dist(f(i),f(j))=\dist(g(i),g(j))$ for all $\{i,j\}\in \binom{[n]}{2}$.
\end{definition}

\begin{definition}
For a graph $G=([n],\mathcal{P})$, an injective map $f \colon [n]\to M$, and $U\subseteq [n]$, we say that $U$ is \emph{reconstructable} from $(G,d_{G,f})$ if every injective map $g\colon [n]\to M$ which realizes $d_{G,f}$ satisfies $\dist(f(i),f(j))=\dist(g(i),g(j))$ for all $\{i,j\}\in \binom{U}{2}$. A pair $\{i,j\}$ is reconstructable if the set $\{i,j\}$ is reconstructable.
\end{definition}

The key distinction is the order of quantifiers. Global rigidity asks for a fixed graph \(G\) whose edge lengths determine the full distance matrix for every injective embedding. Reconstructability is a property of \(G\) together with a particular embedding: the same graph may reconstruct all distances for one embedding and fail to do so for another.

This is the main difference from the standard generic notion of global rigidity, where the embedding is assumed to be generic. A characterization of globally rigid graphs in $\R$ in the injective setting was obtained by~\cite[Theorem~2.4]{GARAMVOLGYI2022112687}. They established that certain edge colorings of the graph imply that the graph is \emph{not} globally rigid in $\R$, and showed that recognizing whether such an obstruction exists is NP-complete.

Our first result is extremal: every sufficiently dense graph contains a large induced globally rigid subgraph.

\begin{theorem}\label{THM : Worst-Case-Bound-Line}
There exists an absolute constant $C>0$ such that every graph $G=([n],\mathcal{P})$ with $|\mathcal{P}| \geq C n^{3/2}$ contains an induced subgraph on  $\Omega\left(\frac{|\mathcal{P}|}{n}\right)$ vertices which is globally rigid in $\mathbb R$.
\end{theorem}

For \(S^1\), the same closure proof gives a reconstructable subset of size
\(\Omega(|\mathcal P|/n)\), with a different absolute constant. A step in the proof is the following graph-theoretic result, which may be of independent
interest.

\begin{theorem}\label{Theorem : Graphs with bounded intersection}
    Let $k\geq1$. Let $G=([n],E)$ be a graph, and let $\Gamma(v)$ denote the set of neighbours of $v$. Suppose that every non-edge $\{i,j\}\notin E$ satisfies $|\Gamma(i) \cap \Gamma(j)| \leq k$. If $|E| \geq 8 n \sqrt{kn}$ then $G$ contains a clique of size at least $\frac{1}{4}\frac{|E|}{n}$.
\end{theorem}

We briefly explain the connection between the two theorems. Starting from
\(G=([n],\mathcal P)\), we add every edge whose endpoints have three common
neighbours, and repeat this operation until it stops. On the line, if we know the three
distances to the common neighbours, the injectivity of the mapping implies that the missing distance is determined. Thus, as far as rigidity is concerned, we may look at the graph obtained by repeatedly adding edges between pairs of \emph{non-adjacent} vertices with at least three common neighbours. We call this graph the $K_{2,3}$ closure of $G$.
Theorem \ref{Theorem : Graphs with bounded intersection} gives a large clique in
the closure which translates back to a globally rigid subgraph of the original graph.

We also study what can be deduced from a random $\mathcal{P}$ in higher dimensions. This direction is inspired by the recent work of Lew, Nevo, Peled, and Raz \cite{https://doi.org/10.48550/arxiv.2202.09917}, who obtained a hitting time result for global rigidity in $\mathbb{R}^d$ in the generic setting.

For non-generic configurations, there is an obvious obstruction: if almost all points lie on an affine hyperplane, then points outside this subspace can be reflected along the subspace - the existence of these reflections rules out the possibility of revealing the distances between those points. In order to say something meaningful, we assume the injective map $f$ is fixed in advance - meaning that the embedding is chosen before the randomness is revealed. We show that in this setup the obstruction of many points on an affine subspace is essentially the only obstruction to reconstruction.

\begin{theorem}\label{THM - Random Reconstruction}
Fix \(d\ge1\), \(0<\tau<1\), and \(C>1/(1-\tau)\). Let
\(V=(v_1,\ldots,v_n)\) be a labelled configuration of distinct points in \(\R^d\) such that every
subcollection of more than \(\tau n\) points affinely spans \(\R^d\). If each
pairwise distance is revealed independently with probability \(p=C\ln n/n\),
then w.h.p. the revealed distances determine \(V\) up to Euclidean isometry.
For \(d=1\), this gives the conclusion for every \(C>1\).
\end{theorem}

For the proof of this theorem, the strategy is to show that a random graph distinguishes \(V\) from every non-isometric configuration \(U\). We split the possible \(U\)'s into
two types. First we consider those whose distances disagree with \(V\) on a
positive fraction of the pairs. These are controlled by showing that the
family of all disagreement graphs between \(V\) and all possible \(U\) has
VC-dimension \(O(dn)\). We then apply the epsilon-net theorem, which implies that sampling with probability
\(\Theta(d/n)\) already hits every such disagreement graph, and hence rules
out these \(U\)'s. For the remaining \(U\)'s, we show that there is an isometry $T$ such that $|\{i:T u_i\ne v_i\}|$ is of small linear size in $n$. We show that w.h.p. every remaining index $i$ with $T u_i\ne v_i$ connects to $d+1$ affinely independent points\footnote{This is where the \(\tau\)-spanning property is used} for which $T(U)$ and $V$ agree. We remark that the above theorem is optimal, in the sense that one cannot replace $1/(1-\tau)$ by any smaller constant. We also adapt the method to the setting where the set is not $\tau$-spanning, in which case $V$ can't be reconstructed, but the same methods give a giant component result, confirming a conjecture of Girão, Illingworth, Michel, Powierski and Scott \cite{FUWgirao2023reconstructing}.

\begin{theorem}\label{thm:weak-threshold-linear-reconstruction}
Fix \(d\ge1\). Let \(V=(v_1,\ldots,v_n)\) be any labelled configuration of
distinct points in \(\mathbb R^d\), and let \(G\sim G(n,p)\). Then \(p=1/n\)
is a weak threshold for reconstructing a giant component of \(V\). More
precisely:
\begin{itemize}
    \item if \(pn\to\infty\), then w.h.p. there is a reconstructable set
    \(A\subseteq[n]\) with \(|A|=n-o(n)\);
    \item if \(pn\to0\), then w.h.p. every reconstructable set has size
    \(o(n)\).
\end{itemize}
The statements hold uniformly over \(V\) \footnote{By this we mean that the constants are independent of $V$, though they do depend on $d$.}.
\end{theorem}

For the giant-component result, we use the same two ingredients as in
Theorem \ref{THM - Random Reconstruction}. The main change is that whenever the spanning assumption does not hold we can pass to an affine subspace and induct.

In Appendix \ref{app:monotone-threshold} we record a related result which was used in a previous version of this work but is no longer used in the current proof, yet we
keep it as it seems interesting in its own right. It proves a sharp threshold for the existence of a monotone path in a random graph whose vertex set is $[n]$: the threshold for a monotone path from \(1\) to \(n\) is \(p=\ln n/n\).
Sharper results, including the critical window, were later obtained by
Blanchard, Curien, Krause, and Reisach \cite{blanchard2025phase}.

\paragraph{Subsequent work.} In the one-dimensional random reconstruction direction, Girão, Illingworth, Michel, Powierski, and Scott \cite{FUWgirao2023reconstructing} proved a sharp random reconstruction and hitting-time theorem for reconstructing a fixed point set on the line. Theorem \ref{thm:random-reconstruction-tau-spanning} did not appear in a previous version and their result improved upon ours. Even in the new form, their analysis is much finer, and gives the exact hitting time, which may be before the minimum degree reaches $2$, due to the injectivity assumption. Montgomery, Nenadov, Portier and Szabó \cite{FUWmontgomery2024global} proved that the random graph process becomes globally rigid in \(\mathbb R\) at the moment the minimum degree becomes \(2\). Their results also extend to random regular graphs. Portier \cite{portier2026reconstructing} and Zakharov \cite{zakharov2026sharp} studied the sparse one-dimensional giant reconstructable subset regime. These later results sharpen the one-dimensional random picture; we highlight that our current proof can't yield a giant reconstructable component at $p$ as small as they get, due to losses in the application of Warren sign bound, and the epsilon-net theorem.

Barnes, Petr, Portier, Randall Shaw, and Sergeev \cite{FUWbarnes2024reconstructing} studied the giant-reconstruction in $\mathbb{R}^d$ question posed by Girão et al. and obtained quantitative bounds which we improve upon. Portier \cite{portier2026reconstructing} further studied removing the density assumption in Theorem \ref{THM : Worst-Case-Bound-Line}. Their methods essentially prove a conjecture from a previous version. The density assumption can be completely removed at a cost of a logarithmic loss, and a globally rigid subgraph on $\Omega(|\mathcal P|/(n\log n))$ vertices can be found.

\section{The Extremal Setting}

\subsection{The \(K_{2,r}\)-closure and metric consequences}

Let \(r\geq1\). A \(K_{2,r}\)-configuration in a graph \(F\) is a tuple \((x,y;z_1,\ldots,z_r)\) of distinct vertices such that \(xy\notin E(F)\), while \(xz_a,yz_a\in E(F)\) for every \(a\in[r]\). We call \(x,y\) the left vertices and \(z_1,\ldots,z_r\) the right vertices.

For a graph \(G=([n],E)\), start with \(G^{(0)}=G\). Having defined \(G^{(m)}\), obtain \(G^{(m+1)}\) by adding every non-edge \(xy\) whose endpoints have at least \(r\) common neighbours in \(G^{(m)}\). Since \(G\) is finite, this sequence is eventually constant. We denote the final graph by \(\operatorname{cl}_r(G)\), and call it the \(K_{2,r}\)-closure of \(G\). Thus \(\operatorname{cl}_r(G)\) is \(r\)-closed: every non-edge has at most \(r-1\) common neighbours.

A closure sequence for \(\operatorname{cl}_r(G)\) is an ordering \(e_1,\ldots,e_m\) of the edges in \(E(\operatorname{cl}_r(G))\setminus E(G)\), together with a choice of \(r\) common neighbours for each added edge, such that if \(G_s=G+\{e_1,\ldots,e_s\}\) and \(e_s=x_sy_s\), then \(x_s\) and \(y_s\) have the chosen common neighbours in \(G_{s-1}\). Ordering the new edges by the round in which they first appear, and arbitrarily inside each round, gives such a sequence.

\begin{lemma}[Three common neighbours on the line]\label{lem:three-common-neighbours-line}
Let \(X\) be a finite set, let \(f\colon X\to\mathbb R\) be injective, and let \(i,j,z_1,z_2,z_3\in X\) be distinct. Then \(\dist(f(i),f(j))\) is determined by the six numbers \(\dist(f(i),f(z_t))\) and \(\dist(f(j),f(z_t))\), where \(t=1,2,3\).
\end{lemma}

\begin{proof}
    Write \(a_t=\dist(f(i),f(z_t))\) and \(b_t=\dist(f(j),f(z_t))\).
    Since \(f\) is injective, the three points \(f(z_1),f(z_2),f(z_3)\) are distinct.

    For \(z\in \mathbb R\), set \(h(z)=|\dist(f(i),z)-\dist(z,f(j))|\). There are two cases: outside the interval between \(f(i)\) and \(f(j)\), \(h(z)=\dist(f(i),f(j))\); inside the interval, each smaller value is attained at most twice. Thus the following deterministic rule recovers \(\dist(f(i),f(j))\) from the numbers \(a_t,b_t\): if all three values \(|a_t-b_t|\) are equal, output this common value; otherwise choose \(t\) minimizing \(|a_t-b_t|\) and output \(a_t+b_t\).
\end{proof}

\begin{proposition}\label{prop:line-closure-distances}
Let \(G=([n],E)\), and let \(H=\operatorname{cl}_3(G)\). Then for every injective map \(f\colon[n]\to\mathbb R\), the edge-distance function \(d_{H,f}\) is uniquely determined by \(d_{G,f}\). Equivalently, if \(g\colon[n]\to\mathbb R\) is injective and \(d_{G,g}=d_{G,f}\), then \(d_{H,g}=d_{H,f}\).
\end{proposition}

\begin{proof}
Fix an injective \(g\colon[n]\to\mathbb R\) with \(d_{G,g}=d_{G,f}\). Let \(e_1,\ldots,e_m\) be a closure sequence, and write \(G_s=G+\{e_1,\ldots,e_s\}\). We prove by induction on \(s\) that \(d_{G_s,g}=d_{G_s,f}\). The case \(s=0\) is immediate. Suppose \(e_s=x_sy_s\). By the sequence, \(x_s,y_s\) have three chosen common neighbours \(z_{s,1},z_{s,2},z_{s,3}\) in \(G_{s-1}\). By induction, the six distances from \(x_s,y_s\) to these vertices agree under \(f\) and \(g\). The deterministic rule in Lemma \ref{lem:three-common-neighbours-line} therefore gives \(\dist(f(x_s),f(y_s))=\dist(g(x_s),g(y_s))\).
\end{proof}

\begin{remark}\label{rem:circle-closure}
The same proof works for \(S^1=\mathbb R/\mathbb Z\), with the metric \(\dist(x,y)=\min_{m\in\mathbb Z}|x-y-m|\), after replacing \(\operatorname{cl}_3(G)\) by \(\operatorname{cl}_5(G)\). The required local fact is the circle analogue of Lemma \ref{lem:three-common-neighbours-line}: five common neighbours whose incident distances are already determined determine the distance between the two remaining vertices. Indeed, if \(D=\dist(x,y)\) and \(h(z)=|\dist(x,z)-\dist(z,y)|\), then \(h=D\) on two arcs and every other value occurs at most four times. Thus, given five distinct right vertices \(z_t\), if all values \(|a_t-b_t|\) are equal, their common value is \(D\); otherwise choose \(t\) minimizing \(|a_t-b_t|\), and then \(D=\min\{a_t+b_t,1-a_t-b_t\}\), where \(a_t=\dist(x,z_t)\) and \(b_t=\dist(z_t,y)\). Consequently, if \(H=\operatorname{cl}_5(G)\) and \(f\colon[n]\to S^1\) is injective, then \(d_{H,f}\) is determined by \(d_{G,f}\).
\end{remark}

We now turn to finding rigid subgraphs in the closure, and show they give rise to such subgraphs in $G$. Fix a closure sequence for \(\operatorname{cl}_3(G)\). For each edge \(e\in E(\operatorname{cl}_3(G))\), we define a set \(U(e)\subseteq[n]\), the vertices used to obtain \(e\), recursively along the sequence. If \(e=xy\in E(G)\), set \(U(e)=\{x,y\}\). If \(e=xy\) is added using common neighbours \(z_1,z_2,z_3\), set
\[
    U(e)
    =
    \{x,y,z_1,z_2,z_3\}
    \cup
    \bigcup_{a=1}^3 U(xz_a)
    \cup
    \bigcup_{a=1}^3 U(yz_a).
\]
At the step where \(e\) is added, the edges \(xz_a\) and \(yz_a\) belong to the previous graph, so their sets \(U(xz_a)\) and \(U(yz_a)\) have already been defined. If \(Q\) is a clique in \(\operatorname{cl}_3(G)\), define
\(U(Q)=Q\cup\bigcup_{xy\in\binom{Q}{2}} U(xy)\).

\begin{lemma}\label{lem:support-globally-rigid}
Let \(e\in E(\operatorname{cl}_3(G))\). Then \(G[U(e)]\) is globally rigid in \(\mathbb R\).
\end{lemma}

\begin{proof}
We induct on the time at which \(e\) appears in the closure sequence, with the original edges treated as time \(0\). If \(e=xy\in E(G)\), then \(G[U(e)]\) is a single edge, hence globally rigid.

Suppose \(e=xy\) is added using common neighbours \(z_1,z_2,z_3\). Let \(f,g\colon U(e)\to\mathbb R\) be injective realizations of \(G[U(e)]\) with the same edge lengths. Their restrictions to any subset of \(U(e)\) are still injective. By induction, each graph \(G[U(xz_a)]\) and \(G[U(yz_a)]\) is globally rigid. These are induced subgraphs of \(G[U(e)]\), so the restrictions of \(f\) and \(g\) to them have the same edge lengths. Hence the six distances \(xz_a,yz_a\) are the same in the two realizations. By Lemma \ref{lem:three-common-neighbours-line}, the distance \(xy\) is also the same for \(f\) and \(g\).

Since the distance \(xy\) agrees in the two realizations, compose \(g\) with an isometry of \(\mathbb R\) so that \(g(x)=f(x)\) and \(g(y)=f(y)\). Each \(z_a\) has the same distances to the two distinct fixed points \(x,y\), so \(g(z_a)=f(z_a)\). Now restrict to \(G[U(xz_a)]\). By induction, all pairwise distances inside this set agree under \(f\) and \(g\). Since \(x,z_a\) are fixed, every vertex of \(U(xz_a)\) is fixed, as a point on the line is determined by its distances to two distinct fixed points. The same argument applies to \(U(yz_a)\). Thus \(f=g\) on \(U(e)\) after a global isometry. Hence \(G[U(e)]\) is globally rigid.
\end{proof}

\begin{corollary}\label{cor:clique-support-globally-rigid}
Let \(Q\) be a clique in \(\operatorname{cl}_3(G)\), and let \(U(Q)\) be the union of \(Q\) and the sets \(U(e)\) over its edges. Then \(G[U(Q)]\) is globally rigid in \(\mathbb R\).
\end{corollary}

\begin{proof}
The case \(|Q|\leq1\) is trivial, and the case \(|Q|=2\) follows from Lemma \ref{lem:support-globally-rigid}. Assume \(|Q|\geq3\). Let \(f,g\colon U(Q)\to\mathbb R\) be injective realizations of \(G[U(Q)]\) with the same edge lengths. For each \(xy\in\binom{Q}{2}\), the restrictions of \(f\) and \(g\) to \(G[U(xy)]\) have the same edge lengths. By Lemma \ref{lem:support-globally-rigid}, the distance \(xy\) is therefore the same under \(f\) and \(g\). Hence the full distance matrix on \(Q\) agrees. After a global isometry, fix two vertices of \(Q\); every other vertex of \(Q\) is then fixed by its distances to them. Finally, for each \(xy\in\binom{Q}{2}\), the vertices \(x,y\) are fixed, and the induced graph \(G[U(xy)]\) is globally rigid. Hence every vertex of \(U(xy)\) is fixed. Thus all vertices of \(U(Q)\) are fixed.
\end{proof}

\begin{proof}[Proof of Theorem \ref{THM : Worst-Case-Bound-Line} assuming Theorem \ref{Theorem : Graphs with bounded intersection}]
Put \(H=\operatorname{cl}_3(G)\). By Proposition \ref{prop:line-closure-distances}, every edge of \(H\) is reconstructable from \((G,d_{G,f})\), for every injective \(f\colon[n]\to\mathbb R\). Moreover, \(H\) is \(3\)-closed, so every non-edge of \(H\) has at most two common neighbours. Since \(G\subseteq H\), we have \(|E(H)|\geq|\mathcal P|\geq Cn^{3/2}\). Taking \(C\) large enough, Theorem \ref{Theorem : Graphs with bounded intersection} applies to \(H\) with \(k=2\), giving a clique \(Q\subseteq[n]\) of size \(|Q|=\Omega(|E(H)|/n)\geq\Omega(|\mathcal{P}|/n)\). By Corollary \ref{cor:clique-support-globally-rigid}, the induced subgraph \(G[U(Q)]\) is globally rigid in \(\mathbb R\), and \(|U(Q)|\geq |Q|=\Omega(|\mathcal P|/n)\).
\end{proof}

\subsection{Proof of Theorem \ref{Theorem : Graphs with bounded intersection}}

Let $G=([n],E)$ be a graph satisfying the assumptions of Theorem \ref{Theorem : Graphs with bounded intersection}. We use the following standard lemma:

\begin{lemma}\label{Lemma : Pruning Graph}
Let $G$ be a graph of average degree $d=\frac{2|E|}{n}$. There exists an induced subgraph $G'$ with \emph{minimum} degree $\delta(G')$ satisfying $ \delta(G') \geq \frac{d}{2}$.
\end{lemma}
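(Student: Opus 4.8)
The plan is to prove this by a greedy \emph{peeling} process: repeatedly delete a vertex of small current degree, and argue via an edge-counting bound that the process cannot strip the graph down to nothing, so the surviving induced subgraph automatically has the desired minimum degree.

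Concretely, I would set $G_0 = G$ and run the following loop for $t = 0, 1, 2, \dots$: if $G_t$ contains a vertex $v_t$ with $\deg_{G_t}(v_t) < d/2$, put $G_{t+1} = G_t - v_t$; otherwise halt and output $G' = G_t$. By construction, whenever the loop halts at some $G_t$, every remaining vertex has degree at least $d/2$ in $G_t$, so $\delta(G') \geq d/2$, which is exactly the conclusion. The only thing that can go wrong is that the loop halts with the empty graph (for which the minimum-degree statement is vacuous and useless), so the substantive part of the argument is to rule this out.

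To rule it out I would track the number of edges destroyed. Each deletion of $v_t$ removes precisely $\deg_{G_t}(v_t) < d/2$ edges, a strict inequality as real numbers (the degrees are integers, but the threshold $d/2$ need not be). If the process were able to delete all $n$ vertices, then the total number of edges removed would be strictly less than $n \cdot \frac{d}{2} = n \cdot \frac{|E|}{n} = |E|$. But deleting every vertex necessarily destroys all $|E|$ edges, so we would have removed strictly fewer than $|E|$ edges while also removing exactly $|E|$ of them, a contradiction. Hence the loop must halt while $G_t$ is still nonempty, yielding a nonempty induced subgraph $G'$ with $\delta(G') \geq d/2$.

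I do not expect a genuine obstacle here; the lemma is standard, and the argument simply discharges the average-degree hypothesis into a minimum-degree guarantee. The one point worth stating carefully is the \emph{strict} inequality in the deletion rule, $\deg_{G_t}(v_t) < d/2$, since it is precisely this strictness that makes the telescoped edge bound come out as $< |E|$ rather than $\leq |E|$, and thereby forces the contradiction that keeps $G'$ nonempty.
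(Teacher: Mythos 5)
Your proposal is correct and uses the same argument as the paper: greedily peel off vertices of current degree below $d/2$ and argue the process cannot exhaust the graph. The only cosmetic difference is the nonemptiness justification --- the paper notes that each deletion can only increase the average degree, while you telescope the edge count ($<n\cdot\frac{d}{2}=|E|$ edges removed if all vertices died, contradiction); your version is if anything the more explicit write-up of the same standard fact, including the correct emphasis on the strict inequality in the deletion rule.
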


\begin{proof}
    Iteratively delete vertices of degree smaller than $d/2$. The process cannot delete every vertex: if it did, each edge would be counted once, at the deletion of its first endpoint, while fewer than $d/2$ edges are deleted at each step. Thus the total number of deleted edges would be less than $nd/2=|E|$, a contradiction. The graph induced on the undeleted vertices satisfies the lemma.
\end{proof}
\noindent We also use the following lemma of Corrádi, whose proof appears in \cite[Lemma~2.1]{Jukna}:

\begin{lemma}\label{LEMMA : Corradi's Lemma}
If $S_1,\ldots,S_N$ are subsets of a finite set $X$, each of size at least $r$, and $|S_i \cap S_j| \leq k$ for all $i\neq j$, then
\[
    |X| \geq \frac{r^2 N}{r+(N-1)k}.
\]
\end{lemma}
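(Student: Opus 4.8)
The plan is to prove the bound by a double-counting argument on the incidences between the ground set $X$ and the family $S_1,\dots,S_N$, followed by an optimization via Cauchy--Schwarz. For each element $x \in X$ let $d(x) = |\{ i : x \in S_i \}|$ denote the number of sets containing $x$, and write $m = |X|$ and $D = \sum_{x \in X} d(x)$. Counting the incidences by sets rather than by points gives
\[
D = \sum_{x \in X} d(x) = \sum_{i=1}^{N} |S_i| \ge rN,
\]
which is the only place the hypothesis $|S_i| \ge r$ is used.

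Next I would count in two ways the triples $(x, i, j)$ with $i < j$ and $x \in S_i \cap S_j$. Grouping by the pair of sets and applying $|S_i \cap S_j| \le k$ yields
\[
\sum_{x \in X} \binom{d(x)}{2} = \sum_{i<j} |S_i \cap S_j| \le \binom{N}{2} k = \frac{N(N-1)}{2}\, k .
\]
Expanding the left-hand side as $\tfrac{1}{2}\bigl(\sum_x d(x)^2 - D\bigr)$ and bounding $\sum_x d(x)^2 \ge D^2/m$ by Cauchy--Schwarz (equivalently the power-mean inequality over the $m$ summands), I obtain
\[
\frac{1}{2}\left( \frac{D^2}{m} - D \right) \le \frac{N(N-1)}{2}\, k, \qquad\text{hence}\qquad m \ge \frac{D^2}{D + N(N-1)k}.
\]

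The final step, which is the only mildly delicate point, is to replace the true incidence count $D$ by its lower bound $rN$. This is \emph{not} automatic, since $D$ occurs in both numerator and denominator; the key observation is that $g(D) = D^2/(D+c)$ with $c = N(N-1)k > 0$ is increasing for $D > 0$ (indeed $g'(D) = D(D+2c)/(D+c)^2 > 0$), so $D \ge rN$ gives $m \ge g(D) \ge g(rN)$. Substituting and simplifying,
\[
m \ge \frac{(rN)^2}{rN + N(N-1)k} = \frac{r^2 N}{\,r + (N-1)k\,},
\]
which is exactly the claimed inequality. I expect the main obstacle to be purely organizational: setting up the two incidence counts consistently, getting the direction of Cauchy--Schwarz right, and verifying the monotonicity of $g$ so that the worst case $D = rN$ may legitimately be substituted. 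No deeper idea beyond these standard counting maneuvers appears to be required.
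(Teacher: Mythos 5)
Your proof is correct, and it is essentially the standard proof of Corr\'adi's lemma: the paper itself does not prove the statement but cites it from Jukna's book (Lemma 2.1), where exactly this double-counting of incidences and pairwise intersections combined with Cauchy--Schwarz is used. Your explicit verification that $g(D)=D^2/(D+c)$ is increasing, justifying the substitution $D \geq rN$, is a genuine (if minor) point of care that the standard write-up glosses over.
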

\noindent We now bound the size of a maximum independent set in $G$ by applying Corrádi's Lemma:

\begin{claim} \label{CLAIM : Independent Set Bound}
    Let $k\geq1$, and let $G$ be a graph on $n$ vertices such that every two distinct non-adjacent vertices $i,j$ satisfy $|\Gamma(i) \cap \Gamma(j)| \leq k$. If the minimum degree of $G$ satisfies $\delta \geq \sqrt{2nk}$ then the size of a maximum independent set in $G$ satisfies $\alpha(G) \leq 2 \frac{n}{\delta }$.
\end{claim}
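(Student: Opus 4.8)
The plan is to take a maximum independent set $S = \{s_1, \dots, s_N\}$ in $G$, so that $N = \alpha(G)$, and to apply Corrádi's Lemma (\Cref{LEMMA : Corradi's Lemma}) to the family of neighborhoods $\{\Gamma(s_i)\}_{i=1}^{N}$, regarded as subsets of the ground set $X = V$ with $|X| = n$. Two facts make this family suitable. First, since $G$ has minimal degree $\delta$, every neighborhood satisfies $|\Gamma(s_i)| \geq \delta$, so we may take $r = \delta$ in the lemma. Second, since $S$ is independent, any two distinct $s_i, s_j$ are non-adjacent, so the hypothesis gives $|\Gamma(s_i) \cap \Gamma(s_j)| \leq k$; this is exactly the bounded-intersection condition required by the lemma.

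Feeding these into Corrádi's Lemma yields
\[
n \;\geq\; \frac{\delta^2 N}{\delta + (N-1)k},
\]
which I would then rearrange to isolate $N$. Clearing denominators gives $n\delta + n(N-1)k \geq \delta^2 N$, and collecting the terms in $N$ produces $N(\delta^2 - nk) \leq n\delta - nk \leq n\delta$. The only point requiring the quantitative hypothesis is the sign and size of the coefficient $\delta^2 - nk$: here the assumption $\delta \geq \sqrt{2nk}$ enters, since it gives $\delta^2 \geq 2nk$, hence $nk \leq \tfrac{1}{2}\delta^2$ and therefore $\delta^2 - nk \geq \tfrac{1}{2}\delta^2 > 0$. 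This guarantees the coefficient is positive (so dividing preserves the inequality) and bounds it from below.

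Dividing through and substituting the lower bound on the denominator then gives
\[
N \;\leq\; \frac{n\delta}{\delta^2 - nk} \;\leq\; \frac{n\delta}{\tfrac{1}{2}\delta^2} \;=\; \frac{2n}{\delta},
\]
which is precisely the desired bound $\alpha(G) \leq 2\tfrac{n}{\delta}$.

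I do not expect any genuine obstacle here: the proof is a clean, one-shot application of Corrádi's Lemma followed by elementary algebra. The only place demanding care is the bookkeeping in the final rearrangement, specifically checking that $\delta^2 - nk$ is positive before dividing — which is exactly what the threshold $\delta \geq \sqrt{2nk}$ is engineered to ensure, and which also explains why this precise constant appears in the hypothesis. The conceptual content is entirely in choosing the neighborhoods of an independent set as the test family, so that the independence supplies the intersection bound and the minimal-degree assumption supplies the uniform lower bound on set sizes.
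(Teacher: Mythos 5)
Your proposal is correct and follows essentially the same route as the paper: apply Corr\'adi's Lemma to the neighborhoods of a maximum independent set (with $r=\delta$ and intersection bound $k$), then rearrange, using $\delta^2 \geq 2nk$ to lower-bound the denominator by $\tfrac{1}{2}\delta^2$. The algebraic bookkeeping matches the paper's chain $\alpha(G) \leq \frac{n(\delta-k)}{\delta^2 - kn} \leq \frac{n\delta}{\delta^2/2} = \frac{2n}{\delta}$ exactly.
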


\begin{proof}
    Denote the elements of the maximum independent set by $s_1,\ldots,s_{\alpha}$ with $\alpha:=\alpha(G)$. By Lemma \ref{LEMMA : Corradi's Lemma}, applied to $S_i = \Gamma(s_i)$, we have

    \begin{align*}
        n \geq \frac{\delta^2 \alpha}{\delta+k(\alpha-1)}.
    \end{align*}

    Rewriting this as an upper bound on $\alpha(G)$, and using our assumption $\delta \geq \sqrt{2nk}$, which implies $\delta^2-kn\geq \frac12\delta^2$, we have:

    \begin{align*}
        \alpha=\alpha(G)\leq \frac{n (\delta -k) }{\delta^2-kn} \leq \frac{n \delta}{\delta^2/2} = 2\frac{n}{\delta}.
    \end{align*}
\end{proof}

\begin{proof}[Proof of Theorem \ref{Theorem : Graphs with bounded intersection}]
    Let $e=|E|$. By Lemma \ref{Lemma : Pruning Graph}, there is an induced subgraph $G'=(W,F)$ with minimum degree $\delta\geq \frac{e}{n}$. Put $N=|W|$. The common-neighbour condition is inherited by induced subgraphs, so every non-edge in $G'$ has at most $k$ common neighbours.

    Since $N\leq n$ and $\delta\geq e/n\geq 8\sqrt{kn}$, Claim \ref{CLAIM : Independent Set Bound} applies to $G'$. Let $I=\{s_1,\ldots, s_{\alpha}\}$ be any independent set of maximum size in $G'$, so $\alpha \leq 2 \frac{N}{\delta }$. Define
    \[
        B_i=\{u\in W:\Gamma_{G'}(u)\cap I=\{s_i\}\}.
    \]
    The following observation is the heart of the argument:

    \begin{observation}
        Each $B_i$ forms a clique in $G'$.
    \end{observation}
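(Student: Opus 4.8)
The plan is to argue by contradiction using the \emph{maximality} of the independent set $I=\{s_1,\ldots,s_\alpha\}$, via a local exchange (swap) argument. It is worth noting at the outset that the bounded-intersection hypothesis $\{i,j\}\notin E \implies |\Gamma(i)\cap\Gamma(j)|\leq k$ plays no role here: this observation is purely a consequence of $I$ being a \emph{maximum} independent set together with the definition of the blocks $B_i$.

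First I would record the easy structural facts. Every $u\in B_i$ satisfies $\Gamma(u)\cap\{s_1,\ldots,s_\alpha\}=\{s_i\}$, so in particular $u$ is adjacent to $s_i$ but to no other $s_j$. Since $I$ is independent, each $s_j$ has empty neighbourhood inside $I$, i.e.\ $\Gamma(s_j)\cap I=\emptyset\neq\{s_i\}$; hence no vertex of $I$ can lie in any $B_i$. Consequently $B_i\cap I=\emptyset$, and the elements of $B_i$ are genuinely outside $I$.

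Now suppose toward a contradiction that $B_i$ is not a clique, so there exist distinct $u,w\in B_i$ with $\{u,w\}\notin E$. Consider the swapped set $I'=(I\setminus\{s_i\})\cup\{u,w\}$. I claim $I'$ is independent: the surviving vertices $\{s_j: j\neq i\}$ are pairwise non-adjacent because $I$ is independent; neither $u$ nor $w$ is adjacent to any $s_j$ with $j\neq i$, by the defining property of $B_i$; and $u,w$ are non-adjacent by assumption. Since $u,w\notin I$ are distinct, we get $|I'|=\alpha-1+2=\alpha+1$, contradicting the maximality of $\alpha(G)$. Therefore $\{u,w\}\in E$, and $B_i$ is a clique.

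The only thing requiring care is the bookkeeping of the exchange: one must check that removing $s_i$ while inserting both $u$ and $w$ \emph{strictly} increases the size (which uses $u\neq w$ and $u,w\notin I$, established above) and \emph{preserves} independence (which uses precisely that $u,w$ attach to $I$ only through $s_i$). There is no serious analytic obstacle; once these membership and adjacency conditions are verified the contradiction is immediate, so the main effort is simply ensuring the swap is valid on both counts.
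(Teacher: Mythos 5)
Your proof is correct and is exactly the paper's argument: the paper also notes that if $x,y\in B_i$ are non-adjacent then $(I\setminus\{s_i\})\cup\{x,y\}$ is a larger independent set, contradicting maximality. You have simply spelled out the bookkeeping (membership, disjointness from $I$, and preservation of independence) that the paper leaves implicit.
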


    The observation follows since, if $x,y\in B_i$ are non-adjacent, then $x$ and $y$ have no neighbours in $I\setminus\{s_i\}$. Therefore $(I\setminus\{s_i\})\cup\{x,y\}$ is an independent set larger than $I$, a contradiction.

    The sets $B_i$ are pairwise disjoint. Since each $B_i$ is a clique, one of them has size at least the average size. Thus:
    \begin{align}
        \omega(G') \geq \frac{|\bigcup_{i=1}^{\alpha} B_i|}{\alpha}. \label{EQUATION : Bounding Cliques}
    \end{align}

    To estimate $|\bigcup_{i=1}^{\alpha} B_i|$, write $A_i=\Gamma_{G'}(s_i)$. The set $\bigcup_i B_i$ is precisely the set of vertices lying in exactly one of the sets $A_i$. Count vertices according to how many of the sets $A_i$ contain them. A vertex contained exactly once contributes $1$ to the right-hand side below; a vertex contained twice contributes $0$; and a vertex contained at least three times contributes non-positively. Hence

    \begin{align*}
        \bigg| \bigcup_{i=1}^{\alpha} B_i \bigg|
        \geq
        \sum_i |A_i|-2\sum_{i<j}|A_i\cap A_j| .
    \end{align*}

    Since the vertices $s_i$ are pairwise non-adjacent, every intersection $A_i\cap A_j$ has size at most $k$. Hence

    \begin{align*}
        \bigg| \bigcup_{i=1}^{\alpha} B_i \bigg|
        \geq
        \delta\alpha-2k{\alpha \choose 2}.
    \end{align*}

    \noindent Plugging this into Equation \ref{EQUATION : Bounding Cliques} and using Claim \ref{CLAIM : Independent Set Bound} gives

    \begin{align*}
        \omega(G)\geq\omega(G') \geq \delta-k(\alpha-1) \geq \delta-k\alpha \geq \delta - 2k\frac{N}{\delta}.
    \end{align*}
    Since $\delta^2\geq64kn\geq4kN$, the last expression is at least $\delta/2$. As $\delta\geq e/n$, this gives $\omega(G)\geq\frac{\delta}{2}\geq\frac{e}{2n}\geq\frac{e}{4n}$.
\end{proof}

The clique size guaranteed by Theorem \ref{Theorem : Graphs with bounded intersection} is asymptotically tight. Indeed, take $\frac{n}{q}$ disjoint copies of $K_q$. Then $|E|=\Theta(nq)$, while $\omega(G)=q=\Theta(|E|/n)$. Moreover, non-edges lie in different components and therefore have no common neighbours.
Our edge density assumption is also tight:
\begin{claim}\label{Claim : Projective Plane Graph}
    For every integer $k\geq1$ and infinitely many $N$, there is a graph $G$ on $N$ vertices such that every non-edge has at most $k$ common neighbours, $|E(G)|=\Theta\left(\sqrt{k}N^{3/2}\right)$, and $\omega(G)=O(k)$.
\end{claim}

\begin{proof}
    Let $F$ be the bipartite incidence graph between points and affine lines in $\mathbb{F}_p^2$. Then $F$ has $m=\Theta(p^2)$ vertices, $\Theta(p^3)=\Theta(m^{3/2})$ edges, and is $C_4$-free.

    Replace each vertex of $F$ by a $k$-clique and each edge of $F$ by all possible edges between the corresponding cliques. Let $G$ be the resulting graph and put $N=km$. If two vertices of $G$ are non-adjacent, then they lie in clusters corresponding to two non-adjacent vertices of $F$. Since $F$ is $C_4$-free, those two vertices have at most one common neighbour in $F$, and hence the two blown-up vertices have at most $k$ common neighbours in $G$. Also,

    \[
        |E(G)|=\Theta(k^2m^{3/2})=\Theta(\sqrt{k}N^{3/2}).
    \]

    Since $F$ is bipartite, every clique of $G$ uses vertices from at most two clusters, and two clusters form a clique exactly when the corresponding vertices of $F$ are adjacent. Thus $\omega(G)=2k$.
\end{proof}

\paragraph{Reconstructing $3$ points on the $3$-regular tree requires $\Omega(n^2)$ distances}

It is natural to ask whether we can find analogs of Theorem \ref{THM : Worst-Case-Bound-Line} for subsets $S$ of metric spaces that are not $\R$ or $S^1$. We now consider the 3-regular tree $\mathbb{T}_3$ with the graph metric. We give an example with quadratically many known distances for which no subset of size 3 is reconstructable.

Consider the infinite binary tree, and associate each vertex of distance $\ell$ from some root $r$ with a length $\ell$ string over $\{0,1\}$. Fix $m$, and let
\[
    S_1=\{x\in\{0,1\}^{3m}: \text{$x$ starts with $2m$ zeros}\},
    \qquad
    S_2=\{x\in\{0,1\}^{3m}: \text{$x$ ends with $2m$ zeros}\}.
\]
Then $|S_1|=|S_2|=2^m=:t$. Every two vertices of $S_1$ are at distance at most $2m$, and every two distinct vertices of $S_2$ are at distance at least $4m$.

In the $3$-regular tree, for an edge $e$ the graph $\mathbb{T}_3 \setminus{e}$ is isomorphic to two copies of the infinite binary tree. Let $R_i$ be the configuration obtained by taking one copy of $S_i$ in each component of $\mathbb{T}_3 \setminus{e}$. Label both configurations by the side of $e$ and by an element of $\{0,1\}^m$. If $\mathcal{P}$ is the set of all pairs with endpoints in different components, then $|\mathcal{P}|=t^2$ and all distances in $\mathcal{P}$ are $6m+1$, for both $R_1$ and $R_2$.

Every triple of points contains two points in the same component of $\mathbb{T}_3\setminus e$. In $R_1$, the distance between such a pair is at most $2m$, whereas in $R_2$ it is at least $4m$. Hence no triple is reconstructable. In this limited sense the example is sharp: the known-distance graph is $K_{t,t}$, which has $t^2$ edges and is triangle-free. By Mantel's theorem, any graph on $2t$ vertices with more than $t^2$ edges contains a triangle; such a triangle is reconstructable since all its distances are known.

\subsection{Sparse globally rigid graphs in $\mathbb{R}$}

We finish the section with a construction and a lower bound for sparse
globally rigid graphs on the line. Given a graph \(G\), let \(T(G)\) be the
graph obtained by deleting every edge \(e=uv\) and replacing it by three
internally disjoint \(u\)-\(v\) paths of length two. Thus \(T(G)\) has
\(|V(G)|+3|E(G)|\) vertices and \(6|E(G)|\) edges.

\begin{lemma}
    If $G$ is globally rigid in $\mathbb{R}$, then $T(G)$ is globally rigid in $\mathbb{R}$.
\end{lemma}

\begin{proof}
    Let \(f,g\) be two injective realizations of \(T(G)\) with the same edge
    lengths. If \(G\) has at most one vertex there is nothing to prove.
    Otherwise \(G\) has an edge, since a globally rigid graph with at least
    two vertices is connected. For every old edge \(uv\in E(G)\), the three
    new subdivision vertices are common neighbours of \(u\) and \(v\) in
    \(T(G)\). By Lemma \ref{lem:three-common-neighbours-line},
    \(\dist(f(u),f(v))=\dist(g(u),g(v))\). Hence the restrictions of \(f\)
    and \(g\) to the old vertices realize the same edge lengths of \(G\).
    Since \(G\) is globally rigid, \emph{all} distances between old vertices \emph{must} agree.

    Choose an old edge \(uv\). After composing \(g\) with an isometry of
    \(\mathbb R\), we may assume \(g(u)=f(u)\) and \(g(v)=f(v)\). Every old
    vertex is then fixed, since its distances to the two distinct points
    \(u,v\) agree under \(f\) and \(g\). Finally, each new subdivision vertex
    is adjacent to the two old endpoints of the edge it subdivides, so it is
    also fixed by its distances to two distinct fixed points. Thus \(f\) and
    \(g\) differ by a global isometry, and \(T(G)\) is globally rigid.
\end{proof}

Starting with $G_1=K_3$ and setting $G_{i+1}=T(G_i)$ gives an infinite family of globally rigid graphs. If $a_i=|E(G_i)|/n_i$, where $n_i$ is the number of vertices of $G_i$, then $a_{i+1}=6a_i/(1+3a_i)$ and $a_1=1$, so $a_i\to 5/3$. Hence the average degree tends to $10/3$.
We also claim the following lower bound on the average degree:

\begin{proposition}
    Any globally rigid graph with at least $4$ vertices has average degree of at least $\frac{12}{5}$.
\end{proposition}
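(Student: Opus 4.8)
The plan is to derive the bound from two purely combinatorial necessary conditions on a graph that is globally rigid in $\mathbb{R}$ and then run a short suppression-and-counting argument. The two conditions I would isolate (both valid for $n\ge 4$) are: (i) every vertex has degree at least $2$; and (ii) no two vertices of degree exactly $2$ are adjacent. Granting these, write $V = V_2 \sqcup V_{\ge 3}$, where $V_2$ is the set of degree-$2$ vertices, set $h=|V_{\ge 3}|$ and $t=|V_2|$, and form the multigraph $H$ on $V_{\ge 3}$ obtained by suppressing (smoothing) every degree-$2$ vertex. Condition (ii) forces $V_2$ to be an independent set, so each suppressed path carries exactly one degree-$2$ vertex; hence distinct degree-$2$ vertices lie on distinct edges of $H$, giving $t \le m$ where $m=|E(H)|$. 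Suppression does not alter the degree of a branch vertex, so $\deg_H(w)=\deg_G(w)\ge 3$ for every $w\in V_{\ge 3}$ and therefore $2m \ge 3h$.

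The content is in establishing (i) and (ii), which is where I expect the real work to be. For (i), a vertex $v$ of degree $\le 1$ can be reflected about its unique neighbour (or placed anywhere, if isolated), producing a second injective embedding with the same edge lengths but different global distances once $n\ge 3$; hence minimum degree $\ge 2$ is necessary. For (ii), suppose $p,q$ are adjacent and both of degree $2$, with second neighbours $a$ and $b$. If $a=b$ then $a$ is a cut vertex, and reflecting $\{p,q\}$ about $\varphi(a)$ breaks rigidity. The genuinely new case is $a\ne b$: here $\{\{a,p\},\{q,b\}\}$ is an edge cut whose two edges are vertex-disjoint and separate the block $\{p,q\}$ from the remaining $\ge 2$ vertices. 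I would exhibit a bad instance by choosing any injective $\varphi$ with $\varphi(a)-\varphi(p)=\varphi(b)-\varphi(q)$ and then \emph{translating} the block $\{p,q\}$ by $t=2(\varphi(a)-\varphi(p))\ne 0$; this flips the orientation of both cut edges while preserving their lengths and all internal distances, yielding a non-isometric injective re-embedding with identical edge data. The main technical point is to keep both $\varphi$ and its translate injective (an open dense condition, available since $n\ge 4$), which is routine.

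Finally I would combine the estimates. Using $n=h+t$, $|E(G)|=m+t$, $t\le m$ and $2m\ge 3h$,
\begin{align*}
2|E(G)| - \tfrac{12}{5} n = \Big(2m-\tfrac{12}{5}h\Big) - \tfrac{2}{5} t \;\ge\; \tfrac{8}{5} m - \tfrac{12}{5} h \;\ge\; \tfrac{8}{5}\cdot\tfrac{3}{2} h - \tfrac{12}{5} h = 0,
\end{align*}
so the average degree $2|E(G)|/n$ is at least $\tfrac{12}{5}$. The inequalities $t\le m$ and $2m\ge 3h$ are simultaneously tight precisely when $H$ is $3$-regular and every edge of $H$ is subdivided once; the smallest such graph is $K_{2,3}$, whose two degree-$3$ hubs are pinned through their three common neighbours by \Cref{claim : distances in R}, after which each degree-$2$ vertex is located by its two neighbours. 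Thus $K_{2,3}$ is globally rigid with average degree exactly $\tfrac{12}{5}$, which both confirms the constant and shows the bound is best possible.
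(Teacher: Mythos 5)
Your proof is correct, and it rests on exactly the same structural facts as the paper's: that a globally rigid graph on at least $4$ vertices has minimum degree at least $2$ and that its degree-$2$ vertices form an independent set (your two-case argument for independence, reflecting when the second neighbours coincide and sliding the adjacent pair when they differ, is essentially the paper's $\varphi^{(c)}$ versus $\varphi^{(-c)}$ construction). Where you diverge is the bookkeeping: the paper splits on whether $|V_2|\ge \frac{3}{5}n$ and in each case bounds $\sum_v \deg(v)$ directly, while you suppress the degree-$2$ vertices into a multigraph $H$ on the branch vertices and combine $t\le m$ with $2m\ge 3h$; both computations are elementary and give the same constant. What your route buys is transparency about when equality holds — $H$ must be $3$-regular with every edge subdivided exactly once — and hence the observation, absent from the paper, that $K_{2,3}$ attains average degree exactly $\frac{12}{5}$ and is globally rigid by \Cref{claim : distances in R} (the two hubs share three neighbours, which pins their distance, and then each remaining vertex is located by its distances to the two hubs), so the bound is sharp. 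One small point worth stating explicitly in your write-up: the neighbours $a,b$ of a suppressed degree-$2$ vertex lie in $V_{\ge 3}$ precisely because $V_2$ is independent, and $a\ne b$ because $G$ is simple, so each suppressed vertex really does contribute one well-defined (parallel) edge of $H$ and two edges of $G$, which is what makes $|E(G)|=m+t$ and $t\le m$ legitimate.
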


\begin{proof}
    Let $G=([n],E)$ be globally rigid. First, \(G\) is connected: otherwise
    one can translate one connected component in an injective realization,
    preserving all edge lengths while changing a distance to a vertex outside
    the component. Also \(G\) has minimum degree at least \(2\). Indeed,
    connectedness rules out isolated vertices. If \(x\) is a leaf with
    neighbour \(y\), choose \(u\in[n]\setminus\{x,y\}\) and a generic
    injective realization \(f\) such that reflecting \(f(x)\) about \(f(y)\)
    does not collide with another vertex and changes the distance from \(x\)
    to \(u\). Replacing \(f(x)\) by \(2f(y)-f(x)\) preserves every edge
    length, since \(x\) is a leaf, contradicting global rigidity.

    Let $D_2$ be the set of vertices of degree $2$. We claim that $D_2$ is
    independent. Suppose that \(x,y\in D_2\) and \(xy\in E\). Let \(w\) be
    the other neighbour of \(x\), and let \(z\) be the other neighbour of
    \(y\). Fix an injective map
    \(\varphi\colon[n]\setminus\{x,y\}\to\mathbb R\).

    If \(w\neq z\), compare the two maps obtained by fixing all other
    vertices and placing \(x,y\) at
    \(\varphi(w)+c,\varphi(z)+c\) in one map and at
    \(\varphi(w)-c,\varphi(z)-c\) in the other. The only edge lengths that
    can change are \(xw,yz,xy\), and in the two maps they are
    \(|c|,|c|\), and \(|\varphi(w)-\varphi(z)|\). For all but finitely many
    choices of \(c\), the two maps are injective realizations with the same
    edge lengths, but the distance from \(x\) to \(z\) changes. This
    contradicts global rigidity.

    It remains to consider \(w=z\). Since \(n\ge4\), choose
    \(u\in[n]\setminus\{w,x,y\}\). Compare the two maps obtained by fixing
    all other vertices and placing \(x,y\) at
    \(\varphi(w)+c,\varphi(w)+c+a\) in one map and at
    \(\varphi(w)-c,\varphi(w)-c-a\) in the other. The only edge lengths that
    can change are \(wx,wy,xy\), and in both maps they are
    \(|c|,|c+a|\), and \(|a|\). For a generic choice of \(a,c\), the two
    maps are injective realizations with the same edge lengths, but the
    distance from \(u\) to \(x\) changes. This again contradicts global
    rigidity. Hence \(D_2\) is independent.

    If $|D_2| \geq 3n/5$, then the edges incident to vertices of $D_2$ are all distinct, so $|E|\geq 2|D_2|\geq 6n/5$. Otherwise,
    \[
        \sum_{v\in [n]}\deg(v)\geq 2|D_2|+3(n-|D_2|)\geq 12n/5.
    \]
    In both cases the average degree is at least $12/5$.
\end{proof}

\section{The Random Setting}

Throughout this section we fix a labelled configuration
\(V=(v_1,\ldots,v_n)\) of distinct points in \(\R^d\). Equivalently, one may
think of an injective map \(f\colon[n]\to\R^d\), but we write
\(v_i=f(i)\). All isometries below are Euclidean isometries of \(\R^d\), and
throughout this section \(n\ge2\).

There is a simple obstruction to \emph{complete} reconstruction. In
\(\R^2\), if \(n-2\) points lie on a line \(L\), and two points \(x,y\) lie
outside it, then reflecting \(y\) across \(L\) gives a configuration with all
distances except the one between \(x\) and \(y\) the same. In general, a
natural obstruction in \(\R^d\) is the existence of affine subspaces
containing almost all of the points. The aim of this section is twofold:
first, to show that this is essentially the only obstruction to complete
reconstruction; second, to show that for arbitrary \(V\), most points can be
reconstructed from randomly revealed distances.

A labelled \(d\)-dimensional configuration is an ordered tuple
\(V=(v_1,\ldots,v_n)\) of distinct points in \(\R^d\). Given another
configuration \(U=(u_1,\ldots,u_n)\), let
\(B(U,V)\) be the graph on \([n]\) whose edges are the pairs \(\{i,j\}\) for
which \(\dist(u_i,u_j)\ne\dist(v_i,v_j)\). We set
\[
    \dist_{\PD}(U,V):=\frac{|E(B(U,V))|}{\binom n2}.
\]
Here \(\PD\) stands for pairwise-distance disagreement.

If \(G\) is a graph on \([n]\), we say that a labelled configuration
\(U=(u_1,\ldots,u_n)\) is \(G\)-compatible with \(V\) if
\(\dist(u_i,u_j)=\dist(v_i,v_j)\) for every \(ij\in E(G)\).
Thus, to reconstruct \(V\), it is enough to show that \(G\) is not compatible
with any configuration \(U\) not isometric to \(V\).

\begin{definition}[LCI distance of configurations]
For two labelled configurations \(U=(u_1,\ldots,u_n)\) and
\(V=(v_1,\ldots,v_n)\), define
\[
        \dist_{\LCI}(U,V)
        :=\min_T \frac{1}{n}\bigl|\{i\in[n]:T u_i\ne v_i\}\bigr|,
\]
where the minimum is over all Euclidean isometries \(T\) of \(\R^d\). Here
\(\LCI\) stands for largest common isometric part.
\end{definition}

\noindent We recall the following well-known fact from Euclidean geometry.

\begin{fact}\label{fact:reference-points-d}
Let $0\le r\le d$, and let $a_0,\ldots,a_r\in\R^d$ be affinely independent.
If $x\in \aff(a_0,\ldots,a_r)$, then $x$ is uniquely determined among all
points of $\R^d$ by
\[
        \dist(x,a_0),\ldots,\dist(x,a_r).
\]
In particular, if $r=d$, then every $x\in\R^d$ is uniquely determined by its
distances to $d+1$ affinely independent reference points.
\end{fact}

The next lemma shows that if two configurations have small PD distance, we can find an isometry matching most of their points.

\begin{lemma}
\label{lem:dense-agreement-d}
For every $d\ge1$ and every two labelled configurations
\(U=(u_1,\ldots,u_n)\) and \(V=(v_1,\ldots,v_n)\),
\[
        \dist_{\LCI}(U,V)
        \le (d+2)\sqrt{\dist_{\PD}(U,V)} .
\]
\end{lemma}

\begin{proof}
Set $B=B(U,V)$, $p=\dist_{\PD}(U,V)$, and $\rho=\sqrt p$. If
$p\ge (d+2)^{-2}$, the claim is trivial. Assume $p<(d+2)^{-2}$, and let
\(S=\{i\in[n]:\deg_B(i)\le \rho n\}\). Observe that
\(|[n]\setminus S|\le \rho n\): if \(\rho=0\) this is clear. For
\(\rho>0\), set \(W=[n]\setminus S\). Then
\(|W|\rho n<\sum_{i\in W}\deg_B(i)\le2|E(B)|=pn(n-1)\), hence
\(|W|<\rho(n-1)\). In particular \(S\neq\emptyset\).

Choose an inclusion-maximal set $A\subseteq S$ with the following two
properties: the points $\{v_a:a\in A\}$ are affinely independent, and no two
vertices of $A$ are adjacent in $B$. Write $q=|A|$, so \(1\le q\le d+1\). Since $B[A]$
has no edges, all pairwise distances inside $A$ agree for $U$ and $V$. As
$\{v_a:a\in A\}$ is affinely independent, the same is true for
$\{u_a:a\in A\}$; hence the congruence \(u_a\mapsto v_a\) on \(A\) extends
to a Euclidean isometry \(T\) of \(\R^d\). Thus \(T u_a=v_a\) for every
\(a\in A\).

Let \(F=\aff\{v_a:a\in A\}\).
We claim that \(T\) matches every \(i\in S\) that either lies in \(A\), or
has no \(B\)-edge to any vertex of \(A\). This is clear for \(i\in A\). Now
let \(i\in S\setminus A\) have no \(B\)-edge to \(A\). If \(v_i\notin F\),
then \(A\cup\{i\}\) still has
affinely independent \(V\)-points and spans no edge of $B$, contradicting
the maximality of $A$. Hence $v_i\in F$. Moreover, since $\{a,i\}\notin E(B)$
for every $a\in A$, the points $T u_i$ and $v_i$ have the same distances
to all reference points $\{v_a:a\in A\}$. By
Fact \ref{fact:reference-points-d}, applied with \(r=|A|-1\), this implies
\(T u_i=v_i\).

Thus the only labels in \(S\) which may fail to be matched by \(T\) are those
whose distance to some label in \(A\) disagrees between \(U\) and \(V\). Since
every vertex of \(A\) lies in \(S\), there are at most
\(\sum_{a\in A}\deg_B(a)\le q\rho n\le(d+1)\rho n\) such labels. Thus \(T\)
mismatches at most \((d+1)\rho n\) labels in \(S\), plus possibly the
\(\rho n\) labels of \([n]\setminus S\). Hence
\(\dist_{\LCI}(U,V)\le(d+2)\rho=(d+2)\sqrt{\dist_{\PD}(U,V)}\).
\end{proof}

We now claim that, in general, distinguishing \(V\) from configurations
\(U\) of large \(\PD\)-distance can be done by randomly sampling a few pairs.
Thus the main difficulty is handling configurations with small
\(\PD\)-distance.

\begin{lemma}
\label{lem:sampling-separates-PD}
Fix $d\ge 1$ and $a\in(0,1)$. There is a constant \(c_{d,a}=O_a(d)\) such that the
following holds for sufficiently large \(n\). Let
\(V=(v_1,\ldots,v_n)\) be a labelled configuration of points in $\mathbb{R}^d$, and let
\(E'\subseteq \binom{[n]}2\) be obtained by keeping each pair independently
with probability \(q=c_{d,a}/n\).

Then, with probability \(1-o(1)\),
simultaneously for all configurations \(U=(u_1,\ldots,u_n)\) with
\(\dist_{\PD}(U,V)\ge a\) we have \(E'\cap E(B(U,V))\neq\emptyset\).
\end{lemma}

\begin{proof}
It is enough to prove the lemma for \(0<a<1/2\); for \(a\ge1/2\), take
\(c_{d,a}=c_{d,1/2}\).
Put \(\Omega=\binom{[n]}2\), and set
\(\mathcal B_V=\{E(B(U,V)):U\in(\R^d)^n\}\subseteq2^\Omega\) and
\(\mathcal B_{V,a}=\{B\in\mathcal B_V:|B|\ge a|\Omega|\}\). It is enough to
show that \(\operatorname{VCdim}(\mathcal B_V)\le C_0dn\) for an absolute
constant \(C_0\). Indeed, by the \(\varepsilon\)-net theorem
\cite[Theorem~6.10]{zbMATH07478425}, with \(\varepsilon=a\) and
\(\delta=e^{-n}\), a \(q\)-random subset of \(\Omega\) hits every member of
\(\mathcal B_{V,a}\) provided
\[
        q|\Omega|
        \ge
        C_{\mathrm{net}}a^{-1}\bigl(C_0dn\log(1/a)+n\bigr).
\]
Thus \(c_{d,a}=4C_{\mathrm{net}}a^{-1}(C_0d\log(1/a)+1)\) works for all large
\(n\), and in particular \(c_{d,a}=O_a(d)\).

It remains to bound the VC dimension of \(\mathcal{B}_V\). Let \(Y\subseteq\Omega\), \(|Y|=m\).
For \(ij\in Y\), write \(U=(u_1,\ldots,u_n)\),
\(u_i=(x_{i,1},\ldots,x_{i,d})\), and set
\(F_{ij}(U)=\sum_{\ell=1}^d (x_{i,\ell}-x_{j,\ell})^2-\|v_i-v_j\|^2\). The
agreement edges projected to \(Y\) are exactly the zero set of the family
\(\{F_{ij}:ij\in Y\}\). Thus the number of possible projections to \(Y\) is
at most the number of zero/nonzero patterns realized by these \(m\) quadratic
polynomials in \(dn\) variables. If \(Y\) is shattered, all \(2^m\)
agreement/disagreement patterns occur. When \(m<dn\), this already gives
\(m=O(dn)\). Otherwise these patterns are zero/nonzero patterns of \(m\)
polynomials of degree \(2\) in \(dn\) variables. By Warren's theorem
\cite[Theorem~10.4]{zbMATH06484571}, at most
\(\left(Cm/(dn)\right)^{dn}\) such patterns occur, for some universal
constant \(C>0\). Hence
\[
        2^m\le \left(C\frac{m}{dn}\right)^{dn}.
\]
Writing \(t=m/(dn)\), this implies \(2^t\le Ct\), and therefore \(t=O(1)\).
Thus \(m=O(dn)\), as required. All bounds above are independent of the
particular configuration \(V\).
\end{proof}

Applying the same argument, we get:

\begin{corollary}\label{cor:variable-scale-PD-separation}
Fix \(d\ge1\), let \(D=pn\to\infty\), and put
\(\alpha=D^{-1/3}\) and \(\eta=\alpha^2/(d+2)^2\). Let
\(V=(v_1,\ldots,v_n)\) be a labelled configuration in \(\mathbb R^d\), and
let \(G\sim G(n,p)\). Then, with probability \(1-o(1)\), every
\(G\)-compatible configuration \(U\) satisfies
\(\dist_{\PD}(U,V)<\eta\), and hence \(\dist_{\LCI}(U,V)<\alpha\). The
\(o(1)\) is uniform over \(V\).
\end{corollary}

\begin{proof}
Use the proof of Lemma \ref{lem:sampling-separates-PD} with
\(\varepsilon=\eta\). The family of disagreement graphs has VC-dimension
\(O(dn)\), so the VC \(\varepsilon\)-net theorem requires
\(O_d(\eta^{-1}n\log(1/\eta))=O_d(nD^{2/3}\log D)\) sampled pairs. Since
\(p\binom n2=(1+o(1))Dn/2\) and \(D^{1/3}/\log D\to\infty\), the random
graph hits every disagreement graph of size at least \(\eta\binom n2\) w.h.p.
Thus w.h.p. every compatible \(U\) has \(\dist_{\PD}(U,V)<\eta\). The final
conclusion follows from Lemma \ref{lem:dense-agreement-d}.
\end{proof}

\subsection{Complete reconstruction of spanning point sets}

Call a configuration \(V=(v_1,\ldots,v_n)\) \(\tau\)-spanning if, for every
\(I\subseteq[n]\) with \(|I|>\tau n\), one has
\(\aff\{v_i:i\in I\}=\R^d\). For a graph \(G\) with vertex set \([n]\), write
\(N_G(i)\) for the neighbours of \(i\).

\begin{lemma}\label{lem:no-small-anchor-closed-sets}
Fix \(d\ge1\), \(0<\tau<1\), \(C>0\), and \(0<\gamma<1-\tau\). Put
\(\theta=1-\tau-\gamma\), and assume \(C\theta>1\). Let
\(V=(v_1,\ldots,v_n)\) be \(\tau\)-spanning, and let \(G\sim G(n,p)\) with
\(p=C\ln n/n\). Then, w.h.p., for every nonempty \(M\subseteq[n]\) with
\(|M|\le\gamma n\), there is \(i\in M\) such that \(N_G(i)\setminus M\)
contains \(d+1\) labels whose \(V\)-points are affinely independent. The
\(o(1)\) is uniform for all \(\tau\)-spanning configurations \(V\).
\end{lemma}

\begin{proof}
Consider \(W\subseteq[n]\) with \(|W|\ge(1-\gamma)n\), and let \(R\) be a
\(p\)-random subset of \(W\). We claim that the probability that \(R\) contains
no \(d+1\) affinely independent \(V\)-points is at most \(n^{-C\theta+o(1)}\).
If \(R\) has no \(d+1\) affinely independent labels, take a maximal
affinely independent \(A\subseteq R\). Then \(|A|\le d\), and
\(R\subseteq L(A):=\{j:v_j\in\aff\{v_a:a\in A\}\}\), with
\(L(\emptyset)=\emptyset\). Since \(V\) is \(\tau\)-spanning,
\(|L(A)|\le\tau n\), and hence \(|W\setminus L(A)|\ge\theta n\). For fixed
\(A\), the probability that \(A\subseteq R\subseteq L(A)\) is at most
\(p^{|A|}e^{-\theta pn}\). Summing over all \(|A|\le d\) gives
\[
    \PP(R\text{ contains no }d+1\text{ affinely independent }V\text{-points})
    \le e^{-\theta pn}\sum_{r=0}^{d}\binom{n}{r}p^r
    \le n^{-C\theta+o(1)},
\]
as \(pn=C\ln n\).

Now fix \(M\subseteq[n]\), \(|M|=k\le\gamma n\), and put
\(W=[n]\setminus M\). The sets \(N_G(i)\setminus M\), \(i\in M\), are
independent \(p\)-random subsets of \(W\). Choose \(\eta>0\) such that
\(C\theta>1+2\eta\). By the estimate above, for large enough \(n\), each one of
these sets fails to contain \(d+1\) affinely independent \(V\)-points with
probability at most \(n^{-1-\eta}\). Hence
\(\PP(M\text{ violates the desired conclusion})\le n^{-k(1+\eta)}\). A union bound
over \(1\le k\le\gamma n\) gives
\[
\sum_{k=1}^{\lfloor\gamma n\rfloor}
        \binom{n}{k} n^{-k(1+\eta)}
    \le
\sum_{k\ge1}
        \left(\frac{e n^{-\eta}}{k}\right)^k
    =o(1),
\]
which proves the lemma.
\end{proof}

\begin{theorem}\label{thm:random-reconstruction-tau-spanning}
Fix \(d\ge1\), \(0<\tau<1\), and \(C>1/(1-\tau)\). Let
\(V=(v_1,\ldots,v_n)\) be \(\tau\)-spanning, and let \(G\sim G(n,p)\), where
\(p=C\ln n/n\). Then, w.h.p., for every \(G\)-compatible configuration
\(U=(u_1,\ldots,u_n)\), there is an isometry \(T\) such that
\(T u_i=v_i\) for every \(i\in[n]\). Equivalently, the revealed distances
determine \(V\) up to isometry. The \(o(1)\) is uniform for all
\(\tau\)-spanning configurations \(V\).
\end{theorem}

\begin{proof}
Choose \(0<\gamma<1-\tau\) so small that
\(C(1-\tau-\gamma)>1\), and put \(a=(\gamma/(d+2))^2\). Also assume that
\(n\) is large enough so \(p\ge c_{d,a}/n\). By
Lemma \ref{lem:sampling-separates-PD} and Lemma \ref{lem:no-small-anchor-closed-sets},
w.h.p. the following two events hold. First, every \(U\) with
\(\dist_{\PD}(U,V)\ge a\) is not \(G\)-compatible with \(V\). Second, every
nonempty \(M\subseteq[n]\) with \(|M|\le\gamma n\) contains an index
\(i\in M\) such that \(N_G(i)\setminus M\) contains \(d+1\) labels whose
\(V\)-points are affinely independent. We show that, on these events, \(V\)
is reconstructable.

Let \(U=(u_1,\ldots,u_n)\) be \(G\)-compatible with \(V\). By the first
event, \(\dist_{\PD}(U,V)<a\). Applying Lemma \ref{lem:dense-agreement-d}, there
is an isometry \(T\) such that, for \(M:=\{i\in[n]:T u_i\ne v_i\}\), we have
\(|M|\le\gamma n\). We claim that \(M=\emptyset\), which proves the theorem.
Indeed, suppose not. By the second event, there is \(i\in M\) such that
\(N_G(i)\setminus M\) contains labels \(a_0,\ldots,a_d\) whose \(V\)-points
are affinely independent. For every \(r=0,\ldots,d\), we have
\(\dist(T u_i,v_{a_r})=\dist(u_i,u_{a_r})=\dist(v_i,v_{a_r})\), where the
first equality uses \(T u_{a_r}=v_{a_r}\), and the second uses
\(G\)-compatibility of \(U\). By Fact \ref{fact:reference-points-d}, applied to
the reference points \(v_{a_0},\ldots,v_{a_d}\), this forces \(T u_i=v_i\),
contradicting \(i\in M\). Hence \(M=\emptyset\), so \(U\) is isometric to
\(V\).
\end{proof}

We remark that one can't replace the assumption $C>1/(1-\tau)$ by a smaller constant. Indeed, take a collection of $\tau n$ points of $V$ spanning a $d-1$ flat $F$, and let $A$ denote their corresponding label set, the rest of the points are arbitrary outside $F$.  Then $G[[n]-A]$ is distributed as $G(n',p)$ with $n'>(1-\tau)n$ and $p=C \log(n)/n <(1-\varepsilon)\log(n')/n'$ for some $0<\varepsilon<1$, and $n'=n-|A|\geq (1-\tau)n$. Hence w.h.p there two isolated vertices in $G[[n]-A]$. The distance between these points can't be determined from distances along $G$, in similar fashion to the two dimensional case.

\subsection{Reconstructing a giant component}

\begin{proposition}\label{prop:partial-reconstruction-upper}
Fix \(d\ge1\), let \(D=pn\to\infty\), and let \(G\sim G(n,p)\). For every
labelled configuration \(V=(v_1,\ldots,v_n)\) of distinct points in
\(\mathbb R^d\), w.h.p. there is a reconstructable set \(A\subseteq[n]\) with
\(|A|=n-o(n)\). The failure probability and the \(o(n)\) term are uniform
over \(V\).
\end{proposition}

\begin{proof}
We prove the statement by induction on
\(s=\dim\aff\{v_1,\ldots,v_n\}\). The case \(s=0\) is trivial. Assume
\(s\ge1\), put \(F=\aff\{v_i:i\in[n]\}\), and set
\(\alpha=D^{-1/3}\), \(\rho=4\alpha\).

First suppose that no affine \((s-1)\)-flat in \(F\) contains more than
\((1-\rho)n\) labels. Call a set \(M\subseteq[n]\) bad if, for every
\(i\in M\), the set \(N_G(i)\setminus M\) contains no \(s+1\) labels whose
\(V\)-points are affinely independent.

We claim that w.h.p. there is no bad set \(M\) with
\(\alpha n\le |M|\le2\alpha n\). Fix such \(M\), write \(m=|M|\), and put
\(W=[n]\setminus M\). If \(R\subseteq W\) is \(p\)-random, then the probability
that \(R\) contains no \(s+1\) affinely independent labels is at most
\(\sum_{r=0}^{s}\binom nr p^r e^{-2\alpha D}\le (1+D)^s e^{-2\alpha D}\le
e^{-\alpha D}\). Indeed, after fixing a maximal affinely independent
\(A\subseteq R\), all of \(R\) must lie in
\(L_A:=\aff\{v_a:a\in A\}\), with \(L_\emptyset=\emptyset\). If
\(A\ne\emptyset\), then \(L_A\) is contained in an affine \((s-1)\)-flat in
\(F\); hence in all cases \(L_A\) contains at most \((1-\rho)n\) labels. Thus
\(|W\setminus L_A|\ge \rho n-|M|\ge(\rho-2\alpha)n=2\alpha n\).
Independence over \(i\in M\) gives probability at most \(e^{-\alpha Dm}\).
Therefore
\[
        \sum_{m=\lceil\alpha n\rceil}^{\lfloor2\alpha n\rfloor}
        \binom nm e^{-\alpha Dm}
        \le
        \sum_m\left(\frac e\alpha e^{-\alpha D}\right)^m=o(1),
\]
since \(\alpha D=D^{2/3}\gg\log(1/\alpha)\).

Work on this event and on the event of Corollary
\ref{cor:variable-scale-PD-separation}. For a compatible \(U\) and an
isometry \(T_U\), set \(M_{T_U}(U,V):=\{i:T_Uu_i\ne v_i\}\). If
\(|M_{T_U}(U,V)|\le\alpha n\), then \(M_{T_U}(U,V)\) is bad: otherwise some
\(i\in M_{T_U}(U,V)\) has neighbours \(a_0,\ldots,a_s\notin M_{T_U}(U,V)\) whose
\(V\)-points are affinely independent. Since \(T_Uu_{a_j}=v_{a_j}\) and \(U\)
is compatible with \(V\), we have
\(\|T_Uu_i-v_{a_j}\|=\|u_i-u_{a_j}\|=\|v_i-v_{a_j}\|\) for all \(j\). The
points \(v_{a_0},\ldots,v_{a_s}\) affinely span \(F\), and \(v_i\in F\), so
Fact \ref{fact:reference-points-d}, applied with \(r=s\) to \(x=v_i\) and the
reference points \(v_{a_0},\ldots,v_{a_s}\), forces \(T_Uu_i=v_i\), a
contradiction.

Let \(X\) be the union of all sets \(M_{T_U}(U,V)\), where \(U\) is compatible,
\(T_U\) is an isometry, and \(|M_{T_U}(U,V)|\le\alpha n\). A union of bad
sets is bad. If \(|X|>\alpha n\), choose a minimal sub-union whose size is at least
\(\alpha n\). Since each added set has size at most \(\alpha n\),
this finite sub-union has size between \(\alpha n\) and \(2\alpha n\), a
contradiction. Hence \(|X|\le\alpha n\). Let \(A=[n]\setminus X\). For every
compatible \(U\), Corollary \ref{cor:variable-scale-PD-separation} gives an isometry
\(T_U\) with \(|M_{T_U}(U,V)|\le\alpha n\); hence
\(M_{T_U}(U,V)\subseteq X\), so \(T_Uu_i=v_i\) for every \(i\in A\).
Therefore \(A\) is reconstructable and
\(|A|\ge(1-\alpha)n\).

It remains to handle the case where some affine \((s-1)\)-flat \(L\subseteq F\)
contains \(S=\{i:v_i\in L\}\) with \(|S|\ge(1-\rho)n\). The graph \(G[S]\) is
distributed as \(G(|S|,p)\), and \(p|S|=(1-o(1))D\to\infty\). By the induction
hypothesis applied to the subconfiguration \((v_i)_{i\in S}\), whose affine
span has dimension at most \(s-1\), the distances revealed inside \(S\)
reconstruct a set \(A\subseteq S\) with \(|A|=|S|-o(|S|)=n-o(n)\) w.h.p.
Since every \(G\)-compatible configuration restricts to a
\(G[S]\)-compatible configuration on \(S\), the same \(A\) is reconstructable
from the full revealed data.
\end{proof}

\begin{proposition}\label{prop:partial-reconstruction-lower}
Fix \(d\ge1\), let \(p=o(1/n)\), and let \(G\sim G(n,p)\). Then w.h.p., for
every labelled configuration \(V=(v_1,\ldots,v_n)\) of distinct points in
\(\mathbb R^d\), every reconstructable set has size \(o(n)\).
\end{proposition}

\begin{proof}
For any fixed \(c<1\), eventually \(p<c/n\). By the well-known subcritical
random graph bound, w.h.p. all components of \(G\) have size \(O(\log n)\).
It is clear that any reconstructable set with at least two labels lies in one
component. Hence every reconstructable set has size \(o(n)\).
\end{proof}

\begin{proof}[Proof of Theorem \ref{thm:weak-threshold-linear-reconstruction}]
The supercritical regime is Proposition \ref{prop:partial-reconstruction-upper}, and
the subcritical regime is Proposition \ref{prop:partial-reconstruction-lower}.
\end{proof}

\section{Concluding Remarks and Open Problems}

We remark that the proof of our extremal result is inspired by a result of \cite{Solymosi} about large cliques in induced $C_4$-free graphs. We also  believe it is interesting to construct sparse globally rigid graphs with average degree as small as possible, and our results leave a gap to be resolved in this direction.

In the random setting there are many open questions remaining. What is the threshold degree $D$, depending on $d$, such that a random $D$-regular graph w.h.p. reconstructs a giant component? What is the actual constant $c_d$ such that sampling pairs with probability $p=c/n$ gives no giant reconstructable component for $c<c_d$, but gives one for $c>c_d$?

It is also unclear whether randomness is essential for reconstruction. For example, does there exist a high-degree expander \(G=([n],E)\) such that, after a uniformly random permutation \(\pi\) of \([n]\), the distances along the permuted graph \(G_\pi\) reconstruct a giant component?

Volume rigidity has also been studied recently \cite{bulavka2025volume,lew2025kvolume}. Can our random-setting results be extended to volume rigidity, and if so, what do they give?

\paragraph{Acknowledgments} We thank Shir Amir, Dan Bar, Nicolas Curien, Dániel Garamvölgyi, Bo'az Klartag, Eran Nevo, Asaf Petruschka, and Ran Tessler for useful discussions. The proofs in Section 3 were found during several sessions with ChatGPT 5.5 Pro. Before interacting with ChatGPT the authors were aware of this proof strategy, but were not able to handle the high disagreement case, Lemma 19, which we found particularly elegant. We also acknowledge Codex with its assistance with rewriting the paper. The authors have verified all proof in this paper, and take full responsibility for the final text and arguments. We are grateful for the referee for many insightful comments which helped improve the correctness and  presentation. The first author would like to thank the ISF for their support under Grant 14.13.20.

\bibliographystyle{plainurl}
\bibliography{ref}

\appendix
\section{Monotone Paths in Random Graphs}\label{app:monotone-threshold}

A path \(P=(v_0,\ldots,v_t)\) is monotone if \(v_0< v_1<\cdots <v_t\).

\begin{theorem}\label{thm:monotone-threshold}
Let \(G \sim G(n,p)\) be a random graph on \([n]\), with its natural order. Then:
\begin{itemize}
    \item for every \(0<\varepsilon<1\), if \(p=(1-\varepsilon)\frac{\ln n}{n}\), then w.h.p. there is no monotone path from \(1\) to \(n\);
    \item for every \(\varepsilon>0\), if \(p=(1+\varepsilon)\frac{\ln n}{n}\), then w.h.p. there is a monotone path from \(1\) to \(n\).
\end{itemize}
\end{theorem}

\begin{proof}
Let \(N\) be the number of monotone paths from \(1\) to \(n\). A monotone path
with \(k\) internal vertices is determined by choosing \(k\) vertices from
\(\{2,\ldots,n-1\}\), so
\(\EE N=\sum_{k=0}^{n-2}\binom{n-2}{k}p^{k+1}=p(1+p)^{n-2}\le p e^{pn}\).
For \(p=(1-\varepsilon)\ln n/n\), this is \(o(1)\), and Markov's inequality
concludes the subcritical case.

\noindent It remains to analyze the supercritical case.
Write \(p=c\ln n/n\), where \(c=1+\varepsilon>1\).

\begin{claim}\label{clm:one-sided-growth}
Let \(p=c\ln n/n\), where \(c>1\). Then there is \(\tau>1/2\), depending
on \(c\), such that w.h.p. at least \(n^\tau\) vertices in
\(\{1,\ldots,\lfloor n/2\rfloor\}\) are reachable from \(1\) by monotone paths
contained in this set.
\end{claim}

\begin{proof}[Proof of the claim]
Choose \(1<c_0<c\), then choose \(K\) so large that
\(r:=K\ln(1+c_0/K)>1\); this is possible since \(r\to c_0\). Choose
\(\delta>0\) so small that
\[
    \left(\frac12-2\delta\right)r>\frac12,
\]
and then fix
\[
    \frac12<\tau<\min\left\{1,\left(\frac12-2\delta\right)r\right\}.
\]

Let \(I_0=\{2,\ldots,\lfloor\delta n\rfloor\}\). Expose the edges from \(1\) to
\(I_0\). Since \(\EE |N(1)\cap I_0|=(c\delta+o(1))\ln n\), Chernoff gives,
for \(a=c\delta/2\), w.h.p.
\[
    |N(1)\cap I_0|\ge a\ln n .
\]
On this event, put \(S_0=N(1)\cap I_0\), and observe that all vertices in
\(S_0\) are connected to \(1\) by a monotone edge. Set
\[
    M=\left\lfloor \frac{n}{K\ln n}\right\rfloor,
    \qquad
    T=\left\lfloor\left(\frac12-2\delta\right)K\ln n\right\rfloor .
\]
Let \(J_1,\ldots,J_T\) be consecutive intervals of length \(M\) immediately
after \(I_0\). As \(n\) is large, they all lie in
\(\{1,\ldots,\lfloor n/2\rfloor\}\), since
\[
    \lfloor\delta n\rfloor+TM
    \le \delta n+\left(\frac12-2\delta\right)n
    <\frac n2 .
\]

Let \(m=\lceil n^\tau\rceil\). We process the \(J_t\)'s in order, maintaining
sets \(S_t\) of vertices reachable from \(1\) by monotone paths contained in
\(I_0\cup J_1\cup\cdots\cup J_t\). Once \(|S_{t-1}|\ge m\), set
\(S_t=S_{t-1}\). Otherwise expose the edges between \(S_{t-1}\) and \(J_t\),
and set
\[
    S_t=S_{t-1}\cup\{y\in J_t: N(y)\cap S_{t-1}\ne\emptyset\}.
\]
Every vertex of \(S_t\) is reachable from \(1\) by a monotone path, since
\(J_t\) lies to the right of all previously used intervals.

Let \(\mathcal F_{t-1}\) be the history before step \(t\), and suppose
\(|S_{t-1}|=x<m\). Then \(x\ge a\ln n\). No edge from \(S_{t-1}\) to \(J_t\)
has previously been exposed, so conditional on \(\mathcal F_{t-1}\),
\[
    |S_t\setminus S_{t-1}|
    \sim \operatorname{Bin}\left(M,1-(1-p)^x\right).
\]
As \(\tau<1\), uniformly for \(x<m\) we have \(px=o(1)\), and therefore
\[
    \EE\bigl(|S_t\setminus S_{t-1}|\mid \mathcal F_{t-1}\bigr)
    =\left(\frac{c}{K}+o(1)\right)x .
\]
Since \(c_0<c\), Chernoff gives, for some constant \(\gamma>0\),
\[
    \PP\left(|S_t\setminus S_{t-1}|<\frac{c_0}{K}x
    \mid \mathcal F_{t-1}\right)
    \le e^{-\gamma x}
    \le n^{-\gamma a}.
\]
Call a step \(t\) active if \(|S_{t-1}|<m\), and bad if it is active and
\[
    |S_t\setminus S_{t-1}|<\frac{c_0}{K}|S_{t-1}|.
\]
For each \(t\), the conditional probability that \(t\) is bad, given
\(\mathcal F_{t-1}\), is at most \(n^{-\gamma a}\). Hence
\[
    \PP(\text{some bad step})\le Tn^{-\gamma a}=o(1).
\]
On the complementary event, every active step satisfies
\[
    |S_t|\ge \left(1+\frac{c_0}{K}\right)|S_{t-1}|.
\]
Thus, if the process has not reached \(m\) by time \(T\), then all \(T\) steps
were active and none was bad, so
\[
    |S_T|
    \ge a\ln n\left(1+\frac{c_0}{K}\right)^T
    =n^{(1/2-2\delta)r+o(1)}
    > n^\tau
\]
for all large \(n\). Since \(|S_T|\) is an integer, this implies
\(|S_T|\ge \lceil n^\tau\rceil=m\), a contradiction. Therefore, with
probability \(1-o(1)\), the process reaches size \(m\), and in particular
\(|S_T|\ge n^\tau\), proving the claim.
\end{proof}

\noindent To complete the proof, apply Claim~\ref{clm:one-sided-growth} in the left
half. W.h.p. there is a set
\[
    L\subseteq \{1,\ldots,\lfloor n/2\rfloor\},
    \qquad |L|\ge n^\tau,
\]
such that every \(u\in L\) is reachable from \(1\) by a monotone path contained
in the left half.

Applying the same argument to the right half in the reverse order, w.h.p.
there is a set
\[
    R\subseteq \{\lfloor n/2\rfloor+1,\ldots,n\},
    \qquad |R|\ge n^\tau,
\]
such that every \(v\in R\) has a monotone path from \(v\) to \(n\) contained in
the right half.

The two explorations expose only edges inside the two halves, so all edges
between \(L\) and \(R\) are still unexposed. Conditional on \(L\) and \(R\),
\[
    \PP(e(L,R)=0)
    \le (1-p)^{|L||R|}
    \le \exp(-p n^{2\tau})
    =o(1),
\]
because \(\tau>1/2\). Hence w.h.p. there is an edge \(uv\) with
\(u\in L\), \(v\in R\). Since \(u<v\), concatenating a monotone path from
\(1\) to \(u\), the edge \(uv\), and a monotone path from \(v\) to \(n\), gives
a monotone path from \(1\) to \(n\). This proves the supercritical assertion,
and hence the theorem.
\end{proof}

\end{document}